\theoremstyle{definition}
\newtheorem{example}{Example}
\theoremstyle{theorem}
\newtheorem{theorem}{Theorem}
\newtheorem{proposition}{Proposition}
\newtheorem{corollary}{Corollary}
\title{On general $(\alpha, \beta)$-metrics of weak Landsberg type}
\author{ A. Ala, A. Behzadi\footnote{Corresponding author.}  ~~and  M. Rafiei-Rad }
\begin{document}
\maketitle
\begin{abstract}
In this paper, we study general $(\alpha,\beta)$-metrics which $\alpha$ is a Riemannian metric and $\beta$ is an one-form. We have proven that every weak Landsberg general $(\alpha,\beta)$-metric is a Berwald metric, where $\beta$ is a closed and conformal one-form. This show that there exist no generalized unicorn metric in this class of general $(\alpha,\beta)$-metric. Further, We show that $F$ is a Landsberg general $(\alpha,\beta)$-metric if and only if it is weak Landsberg general $(\alpha,\beta)$-metric, where $\beta$ is a closed and conformal one-form.
\end{abstract}

\textbf{Keywords:} Finsler geometry; general $(\alpha, \beta)$-metrics; Weak Landsberg metric; generalized Unicorn problem.

\section{Introduction}
A Finsler manifold $(M, F)$ is a $C^\infty$ manifold equipped with a Finsler metric which is a continuous function $F:TM\rightarrow [0,\infty)$ with the following properties:\\
(1) Smoothness: $F(x,y)$ is $C^\infty$ on $TM-\lbrace 0\rbrace $.\\
(2)Positive homogeneity: $F(x,\lambda y)=\lambda F(x,y)$ for all $\lambda>0$.\\
(3) Strong convexity: The fundamental tensor $\left(g_{ij}(x,y)\right)$ is positive definite at all $(x,y)\in TM-\lbrace 0\rbrace$, where
\begin{equation*}
g_{ij}(x,y):=\dfrac{1}{2}[F^2]_{y^iy^j} (x,y).
\end{equation*}

There are a lot of non-Riemannian metrics in Finsler geometry. Randers metric is the simplest non-Riemannian Finsler metric, which was introduced by G. Randers in \cite{G.Randers-1941}. As a generalization of Randers metric, $(\alpha, \beta)$-metric is defined by the following form
\begin{equation*}
F=\alpha \phi(s),~~s=\dfrac{\beta}{\alpha},
\end{equation*}
which $\alpha$ is a Riemannian metric, $\beta$ is a one-form and $\phi(s)$ is a $C^\infty$ positive function. A more general metric class called \textit{general $(\alpha, \beta)$-metric} was first introduced by C. Yu and H. Zhu in \cite{Yu-Zhu}. By definition, it is a Finsler metric expressed in the following form
\begin{equation*}
F=\alpha \phi(b^2,s),~~s=\dfrac{\beta}{\alpha}.
\end{equation*}
This class of Finsler metrics include some Finsler metrics constructed by Bryant (see \cite{Bryant-2002}, \cite{Bryant-1997} and \cite{Bryant-1996}).

In Finsler geometry, There are several classes of metrics, such as Berwald metric, Landsberg metric, and weak Landsberg metric.
We know that Berwald metric is a bit more general than Riemannian and Minkowskian metric. However, every Berwald metric is not only a Landsberg metric but also a weak Landsberg metric.
For $(\alpha,\beta)$-metrics, by definitions, we have the following relations\\

$\lbrace$ Riemannian $\rbrace \& \lbrace$ locally Minkowskian $\rbrace \subset \lbrace$ Berwald $\rbrace \subset \lbrace$ Landsberg $\rbrace$,\\
~\\
and\\

$\lbrace$ Landsberg metrics $\rbrace \subset \lbrace$ weak Landsberg metrics $\rbrace$.\\

The pivotal question is, is there a Landsberg metric that is not Berwaldian? This problem was called "unicorn" by D. Bao \cite{Bao-2007}. Morever, is there weak Landsberg metric that is not Berwaldian? This problem was called "generalized unicorn" by Zou and Cheng \cite{Zou-Cheng-2006}.

In 2009, Z. Shen has proved that a regular $(\alpha,\beta)$-metric $F=\alpha \phi(\beta/\alpha)$-metric on $M$ of dimension $n>2$ is a Landsberg metric if and only if $F$ is a Berwald metric \cite{Shen-2009}. On the other hand, Z. Shen and G.S. Asanov have constructed {\it almost regular} $(\alpha,\beta)$-metrics which are Landsberg metrics but not Berwald metrics respectively (see \cite{Asanov-2006} and \cite{Shen-2009}).
In 2014, Y. Zou and Cheng have showed that if $\phi=\phi(s)$ is a polynomial in $s$, then $F$ is a weak Landsberg metric if and only if $F$ is a Berwald metric. They generalized the main theorem on unicorn problem for regular $(\alpha,\beta)$-metrics in \cite{Zou-Cheng-2006}.

In general $(\alpha,\beta)$-metrics, Zohrehvand and Maleki in \cite{Zohrehvand-2016} showed that hunting for an unicorn cannot be successful in the class of metrics where $\beta$ is a closed and conformal one-form, i.e.
$b_{i|j}=ca_{ij}$,
where $b_{i|j}$ is the covariant derivation of $\beta$ with respect to $\alpha$ and $c=c(x)$ is a scalar function on $M$. In this paper, we show that Landsberg metric and weak Landsberg metric are equivalent in the class of general $(\alpha,\beta)$-metrics where $\beta$ is a closed and conformal one-form and thus hunting for an generalized unicorn cannot be successful.
\begin{theorem}\label{Main Theo}
Let $F=\alpha\phi(b^2,\frac{\beta}{\alpha})$ be a non-Riemannian general $(\alpha,\beta)$-metric on an $n$-dimentional manifold $M$ and $\beta$ satisfies
\begin{equation}\label{condition-closed-conformal}
 b_{i|j}=ca_{ij}, 
\end{equation}
where $b_{i|j}$ is the covariant derivation of $\beta$ with respect to $\alpha$ and $c=c(x)$ is a scalar function on $M$. Then $F$ is a weak Landsberg metric if and only if it is Landsberg metric.
\end{theorem}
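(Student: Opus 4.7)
The \emph{only if} direction is immediate, since $J_i=g^{jk}L_{ijk}$ vanishes whenever $L_{ijk}$ does. The content of the theorem is the converse, and my strategy is to show that under the closed and conformal condition the weak Landsberg equation and the full Landsberg equation are governed by essentially the same underlying ODE on $\phi$.

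First I would write out $L_{ijk}$ and $J_i$ for a general $(\alpha,\beta)$-metric using the standard formulas expressing them through the spray coefficients $G^i$ and the fundamental tensor $g_{ij}$. The hypothesis $b_{i|j}=ca_{ij}$ gives $s_{ij}=0$ and $r_{ij}=ca_{ij}$, so $G^i-G^i_\alpha$ collapses to $c\alpha$ times an expression in $y^i$ and $b^i$ whose coefficients depend only on $\phi(b^2,s)$ and its partial derivatives. Substituting this into the Landsberg-tensor formula produces a decomposition
\[
L_{ijk}=c\bigl(T_1\, h_{ij}m_k+T_2\,(h_{ik}m_j+h_{jk}m_i)+T_3\, m_im_jm_k\bigr),
\]
where $h_{ij}$ is the angular metric, $m_i$ the angular one-form of $(\alpha,\beta)$, and $T_1,T_2,T_3$ are universal polynomial expressions in $\phi,\phi_s,\phi_{ss},\phi_{b^2},\ldots$. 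Tracing with $g^{jk}$ then writes $J_i$ as $c$ times a linear combination of $m_i$ and $h_{i0}/\alpha$ whose coefficients $\Lambda_1,\Lambda_2$ are explicit linear combinations of the $T_\ell$.

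Imposing the weak Landsberg hypothesis $J_i=0$, the linear independence of $m_i$ and $h_{i0}/\alpha$ on the slit tangent bundle (which uses that $F$ is non-Riemannian, so $\beta\not\equiv 0$) splits the identity into two scalar equations $\Lambda_1(b^2,s)=0$ and $\Lambda_2(b^2,s)=0$. The principal obstacle is to verify that this system forces $T_1=T_2=T_3=0$ \emph{individually}, rather than only the particular linear combinations produced by the trace. I expect to handle this by separating rational and irrational-in-$\alpha$ parts of $J_i=0$ to extract further independent identities, and by a short elimination recovering each $T_\ell$ from the $\Lambda_\ell$ and their $s$-derivatives. Once this is in place, $L_{ijk}=0$ follows and $F$ is Landsberg; combined with the Zohrehvand--Maleki theorem that Landsberg implies Berwald in this class, the equivalence of weak Landsberg and Landsberg is established, ruling out generalized unicorns among general $(\alpha,\beta)$-metrics with $\beta$ closed and conformal.
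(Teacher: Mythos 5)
Your reduction of the problem is sound as far as it goes: the ``only if'' direction is indeed trivial, the hypothesis $b_{i|j}=ca_{ij}$ does collapse $G^i-G^i_\alpha$ to an expression in $y^i$ and $b^i$ with coefficients depending only on $\phi$ and its derivatives, and both $L_{jkl}$ and $J_j$ do reduce to scalar identities in $(b^2,s)$. This is also how the paper proceeds (Propositions \ref{prop-Landsberg-prop} and \ref{prop-meanLandsberg-prop}). But your proposal stops exactly at the decisive point. You correctly identify ``the principal obstacle'' --- that the trace only yields particular linear combinations of the coefficients $T_\ell$, whereas Landsberg requires each $T_\ell$ to vanish --- and then you only say you ``expect to handle this'' by separating rational and irrational parts in $\alpha$ and by an elimination using $s$-derivatives. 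Neither tool delivers what you need. The rational/irrational splitting in $\alpha$ is already exhausted in isolating the coefficient of $(b_j-sl_j)$: since $h_{ij}y^j=0$, your putative second one-form $h_{i0}/\alpha$ vanishes identically, so $J_j=0$ produces a \emph{single} scalar functional equation $W(b^2,s)=0$, not two independent equations $\Lambda_1=\Lambda_2=0$. Extracting from that one identity the several independent conditions ($E_{22}=0$, $H_{222}=0$, and $(E-sE_2)\phi_2+(H_2-sH_{22})(s\phi+(b^2-s^2)\phi_2)=0$) is the entire content of the theorem --- recall that in general weak Landsberg does \emph{not} imply Landsberg --- and your proposal contains no actual argument for it.

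For comparison, the paper's route is to compute $J_j=-\frac{c\phi}{2\rho}W_j$ explicitly with $W_j$ proportional to $(b_j-sl_j)$, decompose the scalar coefficient into four groups (Proposition \ref{conditon-mean-Landsberg}), argue each group vanishes, and then observe that the resulting conditions on $\phi$ coincide exactly with the conditions for $L_{jkl}=0$ obtained by Zohrehvand and Maleki in \cite{Zohrehvand-2016}; the equivalence then follows by matching the two systems of PDEs on $\phi$, not by recovering $L_{jkl}=0$ tensorially from $J_j=0$ as you propose. (One can fairly object that the paper's own passage from one identity to four separate equations is asserted rather than argued, but it at least commits to explicit conditions and closes the loop against the known Landsberg characterization.) A final minor point: once you have shown $L_{jkl}=0$, the equivalence of weak Landsberg and Landsberg is already established; the Zohrehvand--Maleki result that Landsberg implies Berwald in this class is needed only for Corollary \ref{coro1}, not for Theorem \ref{Main Theo} itself.
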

\begin{corollary}\label{coro1}
Let $F=\alpha\phi(b^2,\frac{\beta}{\alpha})$ be a non-Riemannian general $(\alpha,\beta)$-metric on an $n$-dimentional manifold $M$ and $\beta$ is closed and conformal.
Then $F$ is a weak Landsberg metric if and only if it is Berwald metric.
\end{corollary}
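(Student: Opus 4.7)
The corollary is essentially an immediate consequence of Theorem \ref{Main Theo} combined with the result of Zohrehvand and Maleki \cite{Zohrehvand-2016} cited in the introduction, so my plan is to prove it by a short chaining argument rather than any new computation.

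First I would record the trivial direction. For any Finsler metric, the inclusions Berwald $\subset$ Landsberg $\subset$ weak Landsberg hold (this is mentioned right before the statement of the unicorn problem in the introduction), so if $F$ is Berwald then $F$ is automatically weak Landsberg; no assumption on $\beta$ is needed here.

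For the non-trivial direction, suppose $F=\alpha\phi(b^2,\beta/\alpha)$ is a non-Riemannian general $(\alpha,\beta)$-metric with $\beta$ closed and conformal, i.e.\ $b_{i|j}=c\,a_{ij}$ for some scalar $c=c(x)$. Then $\beta$ satisfies the hypothesis \eqref{condition-closed-conformal} of Theorem \ref{Main Theo}, so assuming $F$ is weak Landsberg we may invoke that theorem to upgrade the conclusion: $F$ is in fact Landsberg. At this point the hypotheses of Zohrehvand--Maleki's theorem from \cite{Zohrehvand-2016} are met (general $(\alpha,\beta)$-metric, $\beta$ closed and conformal, $F$ Landsberg), and that theorem concludes that $F$ is a Berwald metric. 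Combining the two implications finishes the proof.

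I expect no genuine obstacle here, since the corollary is designed as a direct combination of the main theorem with a known result; the only thing to verify is that the hypothesis ``$\beta$ is closed and conformal'' in the corollary is indeed synonymous with the condition \eqref{condition-closed-conformal} used in Theorem \ref{Main Theo} and in \cite{Zohrehvand-2016} (this is standard: closedness of $\beta$ together with $b_{i|j}$ being a scalar multiple of $a_{ij}$ forces the symmetric equation $b_{i|j}=c\,a_{ij}$). Once this is noted, the proof is a two-line invocation of Theorem \ref{Main Theo} followed by \cite{Zohrehvand-2016}.
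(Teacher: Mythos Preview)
Your proof is correct and follows essentially the same approach as the paper: the paper's argument also reduces to invoking the equivalence from Theorem~\ref{Main Theo} (via the equations \eqref{equation of weak Landsberg-1}--\eqref{equation of weak Landsberg-2}) and then citing \cite{Zohrehvand-2016} together with \cite{Zhu-2015} to pass from the Landsberg conditions to the Berwald conditions $E-sE_2=0$, $H_2-sH_{22}=0$. The only cosmetic difference is that the paper works at the level of these explicit equations and names \cite{Zhu-2015} separately, whereas you package that step inside the citation of \cite{Zohrehvand-2016}.
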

Thus, under the certain condition, the generalized unicorn's problem cannot be successful in the class of general $(\alpha,\beta)$-metrics.
In this case $F$ can be expressed by
\[
F=\alpha\varphi
\left( \dfrac{s^2}{e^{\int{(\frac{1}{b^2}-b^2\theta)db^2}}+s^2\int{\theta e^{\int{(\frac{1}{b^2}-b^2\theta)db^2} }db^2}}
\right) e^{ \int{(\frac{1}{2}b^2\theta-\frac{1}{b^2} ) db^2} }s
\]
where $\varphi(.)$ is any positive continuously differentiable function and $\theta$ is a smooth function of $b^2$ \cite{Zhu-2015}.

\section{Preliminaries}
For a given Finsler metric $F=F(x,y)$, the geodesic of $F$ satisfies the following differential equation:
\[ \dfrac{d^2x^i}{dt^2}+2 G^i\left(x,\dfrac{dx}{dt}\right)=0,  \]
where $G^i=G^i(x,y)$ are called the geodesic coefficients defined by
\[ G^i=\dfrac{1}{4} g^{il} \lbrace [F^2]_{x^my^l}y^m-[F^2]_{x^l} \rbrace. \]

For a tangent vector $y:=y^i\dfrac{\partial}{\partial x^i}\in T_xM$, the Berwald curvature $\textbf{B}_y:=B_{jkl}^i \frac{\partial}{\partial x^i}\otimes dx^j\otimes dx^k \otimes dx^l$, can be expressed by
\[ B_{jkl}^i:=\dfrac{\partial^3 G^i}{\partial y^j\partial y^k\partial y^l}.\]
Thus, a Finsler metric $F$ is a Berwald metric if and only if $\textbf{B}=0$.

The Landsberg curvature can be expressed by 
\[ L_{jkl}:=-\dfrac{1}{2} y^m g_{im} B_{jkl}^i \] 
A Finsler metric $F$ is a Landsberg metric if and only if $L_{jkl}=0$, i.e.
\[ y^m g_{im}B_{jkl}^i =0. \]
Thus,  Berwald metrics are always Landsberg metrics.

There is a weaker non-Riemannian quantity than the Landsberg curvature $\textbf{L}$ in Finsler geometry, $\textbf{J}=J_k dx^k$, where
\begin{equation}
J_k:=g^{ij} L_{ijk},
\end{equation}
and $(g^{ij})=(g_{ij})^{-1}$.
We call $\textbf{J}$ the mean Landsberg curvature of Finsler metric $F$.
A Finsler metric $F$ is called weak Landsberg metric if its mean Landsberg curvature $\textbf{J}$ vanishes \cite{Shen-2001}.

Let $F$ be a Finsler metric on a manifold $M$. $F$ is called a general $(\alpha, \beta)$-metrics, if $F$ can be expressed as the form
\begin{equation}\label{g(ab)-mdef}
 F=\alpha\phi(b^2, s) ~~,~~s=\dfrac{\beta}{\alpha},~~ b^2:=||\beta||_\alpha^2 ,
\end{equation}
where $\alpha$  is a Riemannian metric and $\beta:=b_i(x)y^i$ is an one-form with $||\beta||_\alpha <b_0$ for every $x\in M$.
The function $\phi=\phi(b^2, s)$ is a positive $C^\infty$ function satisfying
\begin{equation*}
\phi-s\phi_2>0,~~\phi-s\phi_2+(b^2-s^2)\phi_{22}>0
\end{equation*}
when $n\geq 3$ or
\begin{equation*}
\phi-s\phi_2+(b^2-s^2)\phi_{22}>0
\end{equation*}
when $n=2$, where $s$ and $b$ are arbitrary numbers with $|s|\leq b <b_0$, for some $0<b_0\leq +\infty$.
In this case, the fundamental tensor is given by \cite{Yu-Zhu}
\begin{equation}\label{def-g}
g_{ij}=\rho a_{ij}+\rho_0 b_ib_j +\rho_1(b_i\alpha_{y^j}+b_j\alpha_{y^i}) -s\rho_1 \alpha_{y^i}\alpha_{y^j},
\end{equation}
where
\begin{equation}\label{def-rho}
\rho:=\phi(\phi-s\phi_2),~~\rho_0:=\phi\phi_{22}+\rho_2\rho_2,~~\rho_1=(\phi-s\phi_2)\phi_2-s\phi\phi_{22}.
\end{equation}
Moreover,
\begin{align}
&\det(g_{ij})=\phi^{n+1}(\phi-s\phi_2)^{n-2}(\phi-s\phi_2+(b^2-s^2)\phi_{22})\det(a_{ij}), \\
&g^{ij}=\dfrac{1}{\rho}\lbrace a^{ij}+\eta b^ib^j +\eta_0 \alpha^{-1}(b^iy^j+b^jy^i)+ \eta_1\alpha^{-2}y^iy^j \rbrace , \label{g^ij}
\end{align}
where
$(g^{ij})=(g_{ij})^{-1},~(a^{ij})=(a_{ij})^{-1},~ b^i=a^{ij}b_j$,
\begin{align}
\eta &=-\dfrac{\phi_{22}}{\phi-s\phi_2+(b^2-s^2)\phi_{22}} ,  \quad
\eta_0 =-\dfrac{(\phi-s\phi_2)\phi_2 -s\phi\phi_{22}}{\phi(\phi-s\phi_2+(b^2-s^2)\phi_{22})} , \label{def-eta} \\
\eta_1 &=\dfrac{(s\phi+(b^2-s^2)\phi_2)((\phi-s\phi_2)\phi_2-s\phi\phi_{22})}{\phi^2(\phi-s\phi_2+(b^2-s^2)\phi_{22})}. \nonumber
\end{align}
Not that, we use the indices 1 and 2 as the derivation with respect to $b^2$ and $s$, respectively \cite{Yu-Zhu}.

Let $b_{i|j}$ denote the coefficients of the covariant derivative of $\beta$ with respect to $\alpha$. Let \cite{Yu-Zhu}
\begin{align*}
r_{ij} &:=\dfrac{1}{2}(b_{i|j}+b_{j|i}),~~ r_00:=r_{ij}y^iy^j,~~ r_i:=b^jr_{ji},~~ r_0:=r_iy^i,~~ r^i:=a^{ij}r_j,~~ r:=b^ir_i \\
s_{ij} &:=\dfrac{1}{2}(b_{i|j}-b_{j|i}),~~ s_0^i:=a^{ik}s_{kj}y^j,~~ s_i:=b^js_{ji},~~ s_0:=s_iy^i,~~ s^i:=a^{ij}s_j .
\end{align*}
$\beta$ is a closed one-form if and only if $s_{ij}=0$, and it is a conformal one-form with respect to $\alpha$ if and only if
$b_{i|j}+b_{j|i}=ca_{ij}$, where $c=c(x)$ is a nonzero scalar function on $M$. Thus, $\beta$ is closed and conformal with respect to $\alpha$ if and only if $b_{i|j}=ca_{ij}$, where $c=c(x)$ is a nonzero scalar function on $M$.

For a general $(\alpha, \beta)$-metric, its spray coefficients $G^i$ are related to the spray coefficients $G_\alpha^i$ of $\alpha$ by \cite{Yu-Zhu}
\begin{align}\label{spray coefficient}
G^i =&G_\alpha^i+\alpha Q s_{~0}^i+\lbrace \Theta (-2\alpha Q s_0+r_00+2\alpha^2 Rr)+\alpha\Omega (r_0+s_0)\rbrace l^i \nonumber \\
&+\lbrace \Psi (-2\alpha Q s_0+r_00 +2\alpha^2 Rr)+\alpha \Pi (r_0+s_0) \rbrace b^i -\alpha^2 R(r^i+s^i),
\end{align}
where $l^i:=\dfrac{y^i}{\alpha}$ and
\begin{align*}
Q&:=\dfrac{\phi_2}{\phi-s\phi_2},~~~~ 
R:=\dfrac{\phi_1}{\phi-s\phi_2},~~~~\\
\Theta &:=\dfrac{(\phi-s\phi_2)\phi_2-s\phi\phi_{22}}{2\phi(\phi-s\phi_2+(b^2-s^2)\phi_{22})},~~~~
\Psi:=\dfrac{\phi_{22}}{2(\phi-s\phi_2+(b^2-s^2)\phi_{22})} \\
\Pi &:=\dfrac{(\phi -s\phi_2)\phi_{12} -s\phi_1 \phi_{22}}{(\phi-s\phi_2)(\phi-s\phi_2+(b^2-s^2)\phi_{22})},~~~~
\Omega:=\dfrac{2\phi_1}{\phi}-\dfrac{s\phi+(b^2-s^2)\phi_2}{\phi}\Pi
\end{align*}

When $\beta$ is closed and conformal one-form, i.e. satisfies
\eqref{condition-closed-conformal},
then
\begin{equation*}
r_{00}=c\alpha^2,~~~ r_0=c\beta,~~~ r=cb^2,~~~ r^i=cb^i,~~~ s_{~0}^i=s_0=s^i=0
\end{equation*}
Substituting this into \eqref{spray coefficient} yields \cite{Zhu-2015}
\begin{equation}\label{subs-spray-coeff}
G^i:=G_\alpha^i +c\alpha^2\lbrace \Theta (1+2Rb^2)+s\Omega \rbrace l^i+ c\alpha^2\lbrace \Psi(1+2Rb^2)+s\Pi -R \rbrace b^i
\end{equation}
If we have
\begin{align}
E &:=\dfrac{\phi_2 +2s\phi_1}{2\phi}-H\dfrac{s\phi +(b^2-s^2)\phi_2}{\phi} \label{def-E} \\
H&:=\dfrac{\phi_{22}-2(\phi_1-s\phi_{12})}{2(\phi-s\phi_2+(b^2-s^2)\phi_{22})}, \label{def-H}
\end{align}
then from \eqref{subs-spray-coeff}
\begin{equation}\label{spray-coeff with cond-closed and conf}
G^i:=G_\alpha^i+c\alpha^2 E l^i+c\alpha^2 Hb^i.
\end{equation}
The Berwald curvature of a general $(\alpha,\beta)$-metric, when $\beta$ is a closed and conformal one-form, is computed in \cite{Zhu-2015}:
\begin{proposition}\label{prop-Berwald}
Let $F=\alpha\phi(b^2,s),~s=\beta/\alpha$, be a general $(\alpha, \beta)$-metric on an $n$-dimensional manifold $M$. Suppose that $\beta$ satisfies \eqref{condition-closed-conformal}, then the Berwald curvature of $F$ is given by
\begin{equation}
B^i_{~jkl} = \dfrac{c}{\alpha} U^i_{~jkl}
\end{equation}
where
\begin{align*}
U^i_{~jkl}:=&\lbrace [(E-sE_2)a_{kl}+E_{22} b_k b_l]\delta^i_j + s(3E_{22}+sE_{222})l_l l_j b_k l^i \nonumber\\
  &-(E_{22}+sE_{222}) b_l l_j b_k l^i   \rbrace (k \rightarrow l \rightarrow j \rightarrow k)\nonumber\\
  &-\lbrace sE_{22}[a_{jl}b_k l^i+(l_k b_l+l_l b_k)\delta^i_j ]\nonumber\\
  &+(E-sE_2-s^2E_{22})(a_{jl}l^i+l_l\delta^i_j)l_k   \rbrace (k \rightarrow l \rightarrow j \rightarrow k)\nonumber\\
  &+\lbrace (3E-3sE_2-6s^2E_{22}-s^3E_{222})l_j l_k l_l + E_{222}b_l b_k b_j \rbrace l^i \nonumber\\
  &+\lbrace (H_2-sH_{22})(b_j-sl_j)a_{kl}-(H_2-sH_{22}-s^2H_{222})b_ll_jl_k \nonumber\\
  &-sH_{222}b_kb_ll_j  \rbrace b^i (k \rightarrow l \rightarrow j \rightarrow k)\nonumber\\
  &+\lbrace s(3H_2-3sH_{22}-s^2H_{222})l_jl_kl_l+H_{222} b_jb_lb_k  \rbrace b^i ,
\end{align*}
where $E$ and $H$ is defined in \eqref{def-E} and \eqref{def-H}, $l_j:=a_{ij}l^i$ and
$(k \rightarrow l \rightarrow j \rightarrow k)$ denotes cyclic permutation.
\end{proposition}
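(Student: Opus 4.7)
The plan is to compute $B^i_{jkl} = \partial^3 G^i/(\partial y^j \partial y^k \partial y^l)$ directly from the closed-form spray coefficients \eqref{spray-coeff with cond-closed and conf}. Since $G_\alpha^i$ is quadratic in $y$, its third $y$-derivative vanishes and drops out of $B^i_{jkl}$. The problem therefore reduces to differentiating the two additional terms $c\alpha^2 E l^i = c\alpha E y^i$ and $c\alpha^2 H b^i$, where $c(x)$ and $b^i(x)$ do not depend on $y$, while $E$ and $H$ are functions of $(b^2, s)$ depending on $y$ only through $s = \beta/\alpha$. A quick homogeneity check already tells us that $B^i_{jkl}$ must be homogeneous of degree $-1$ in $y$, and this is consistent with the final answer having the shape $(c/\alpha)\,U^i_{jkl}$ with $U^i_{jkl}$ of degree zero.

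The setup step is to record the elementary $y$-derivative rules $\alpha_{y^j} = l_j$, $s_{y^j} = (b_j - s l_j)/\alpha$, $(l^i)_{y^j} = (\delta^i_j - l^i l_j)/\alpha$, $(y^i)_{y^j} = \delta^i_j$, and, for any smooth $f=f(b^2,s)$, $f_{y^j} = f_2\,(b_j - s l_j)/\alpha$, so that iterated derivatives bring in $f_{22}$ and $f_{222}$. Each $y$-differentiation lowers homogeneity by one and inserts an extra factor $\alpha^{-1}$, in agreement with the overall $c/\alpha$ prefactor noted above.

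Applying the first derivative produces a sum of tensorial monomials built from the index set $\{\delta^i_j, l^i, b^i\} \otimes \{a_{jk}, l_j, b_j\}$ with coefficients involving $E, E_2$ and $H, H_2$. Iterating twice more generates the higher $s$-derivatives $E_{22}, E_{222}, H_{22}, H_{222}$ and further mixed tensorial products that must then be sorted by free-index type: the monomials carrying a trailing $\delta^i_j$ or $l^i$ all come from the $E l^i$ contribution (together with the cross terms in which the factor $l^i$ itself is differentiated), while those carrying a trailing $b^i$ all come from the $Hb^i$ contribution. This matches the two blocks visible in the stated formula. The cyclic symbol $(k \to l \to j \to k)$ then bundles the three-fold symmetry that $B^i_{jkl}$ has in its lower indices.

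The main obstacle is pure bookkeeping: the third derivative of $c\alpha E y^i + c\alpha^2 H b^i$ produces a large number of monomial terms, and verifying that after collecting and cyclically symmetrizing they collapse into precisely the compact form of $U^i_{jkl}$ displayed above requires disciplined index tracking. The most efficient route is to differentiate once explicitly, then exploit the full symmetry of $B^i_{jkl}$ in $(j,k,l)$ to avoid recomputing symmetric pieces, and finally to rewrite every term in the $(a_{jk}, l_j, b_j)$ tensorial basis before invoking the cyclic-permutation shorthand, as carried out in \cite{Zhu-2015}.
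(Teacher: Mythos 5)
Your approach---differentiating the reduced spray coefficients $G^i=G^i_\alpha+c\alpha^2El^i+c\alpha^2Hb^i$ three times in $y$, discarding $G^i_\alpha$ as quadratic, and collecting terms in the $(a_{jk},l_j,b_j)$ basis with the derivative rules $\alpha_{y^j}=l_j$ and $f_{y^j}=f_2(b_j-sl_j)/\alpha$---is exactly the computation behind this proposition; the paper itself gives no proof and simply cites \cite{Zhu-2015}, where this calculation is carried out. Your outline is correct and matches that route, though note it remains a plan: the actual term-by-term expansion and collapse into $U^i_{~jkl}$ is deferred to ``bookkeeping'' rather than verified.
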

\section{The mean Landsberg curvature of General $(\alpha, \beta)$-metrics}
By use of Proposition \ref{prop-Berwald}, M. Zohrehvand and H. Maleki calculated the Landsberg curvature of a general $(\alpha, \beta)$-metric in \cite{Zohrehvand-2016}, when $\beta$ is a closed and conformal one-form:
\begin{proposition}\label{prop-Landsberg-prop}
Let $F=\alpha\phi(b^2,s),~s=\beta/\alpha$, be a general $(\alpha, \beta)$-metric on an $n$-dimensional manifold $M$. Suppose that $\beta$ satisfies \eqref{condition-closed-conformal}, then the Landsberg curvature of $F$ is given by
\begin{equation}\label{prop-Landsberg}
L_{jkl}=-\dfrac{c}{2}\phi V_{jkl}
\end{equation}
where
\begin{align}
V_{jkl}:=&\lbrace [(E-sE_2)a_{kl}+E_{22} b_k b_l](\phi l_j+\phi_2(b_j-sl_j))  \nonumber\\
  &-[(E_{22}+sE_{222})(b_l-sl_l)-2E_{22}sl_l]\phi b_k l_j  \rbrace   (k \rightarrow l \rightarrow j \rightarrow k)     \nonumber\\
  &-\lbrace sE_{22}[a_{jl}b_k \phi +(l_lb_k+l_kb_l)(\phi l_j+\phi_2(b_j-sl_j))] \nonumber\\
  &+(E-sE_2-s^2E_{22})[a_{jl}l_k\phi+(\phi l_j+\phi_2(b_j-sl_j))l_kl_l] \rbrace (k \rightarrow l \rightarrow j \rightarrow k)\nonumber\\
  &+[(3E-3sE_2-6s^2E_22-s^3E_{222})l_j l_k l_l + E_{222}b_j b_k b_l]\phi   \nonumber\\
  &+\lbrace (H_2-sH_{22})(a_{kl}(b_j-sl_j)-b_l l_k l_j)  \nonumber\\
  &-sH_{222}l_jb_l(b_k-sl_k) \rbrace (s\phi +(b^2-s^2)\phi_2)  (k \rightarrow l \rightarrow j \rightarrow k)\nonumber\\
  &+[s(3H_2-3sH_{22}-s^2H_{222})l_jl_kl_l+H_{222} b_jb_kb_l](s\phi +(b^2-s^2)\phi_2).
\end{align}
\end{proposition}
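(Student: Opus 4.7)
The plan is to apply the definition $L_{jkl} = -\tfrac{1}{2} y^m g_{im} B^i_{~jkl}$ directly and to substitute the Berwald curvature from Proposition \ref{prop-Berwald}. All the content of the proposition is then packaged in the single contraction $y^m g_{im}\, U^i_{~jkl}$, so the task splits cleanly into (i) computing the tangent covector $y^m g_{im}$ in closed form and (ii) pushing this covector through the various summands of $U^i_{~jkl}$.

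For step (i) I would contract the fundamental tensor \eqref{def-g} with $y^m$. Using $\alpha_{y^i} = l_i$, $l_i y^i = \alpha$, and $b_i y^i = s\alpha$, the four terms collapse to
\[
y^m g_{im} = \alpha\bigl[\rho\, l_i + (s\rho_0 + \rho_1)\, b_i\bigr].
\]
Reading $\rho_0 = \phi\phi_{22} + \phi_2^2$ in \eqref{def-rho}, the combination $s\rho_0 + \rho_1$ simplifies to $\phi\phi_2$, so
\[
y^m g_{im} = \alpha\phi\bigl[\phi\, l_i + \phi_2(b_i - s l_i)\bigr].
\]
Combining with $B^i_{~jkl} = (c/\alpha)\, U^i_{~jkl}$ the $\alpha$'s cancel and
\[
L_{jkl} = -\tfrac{c\phi}{2}\bigl[\phi\, l_i + \phi_2(b_i - s l_i)\bigr]\, U^i_{~jkl},
\]
which is already the announced shape $-\tfrac{c}{2}\phi\, V_{jkl}$ with $V_{jkl} := \bigl[\phi\, l_i + \phi_2(b_i - s l_i)\bigr]\, U^i_{~jkl}$.

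For step (ii) I would unpack this contraction term by term in $U^i_{~jkl}$, relying on the three elementary identities
\[
l_i l^i = 1,\qquad l_i b^i = s,\qquad \bigl[\phi\, l_i + \phi_2(b_i - s l_i)\bigr]\, b^i = s\phi + (b^2 - s^2)\phi_2.
\]
These read off the three possible fates of the free index $i$ in $U^i_{~jkl}$: a factor $l^i$ contributes $\phi$, a factor $\delta^i_{\,j}$ contributes the multiplier $\phi\, l_j + \phi_2(b_j - s l_j)$ transplanted onto the index $j$, and a factor $b^i$ contributes $s\phi + (b^2 - s^2)\phi_2$. This already explains the block structure of the claimed $V_{jkl}$: the $E$-terms attached to $\delta^i_{\,j}$ acquire the prefactor $\phi\, l_j + \phi_2(b_j - sl_j)$, the purely $l^i$-terms acquire a plain $\phi$, and all $H$-terms attached to $b^i$ acquire $s\phi + (b^2 - s^2)\phi_2$.

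The main obstacle is purely bookkeeping: $U^i_{~jkl}$ is already symmetrized by the cyclic permutation $(k \to l \to j \to k)$, whereas the multiplier $\phi\, l_i + \phi_2(b_i - s l_i)$ is \emph{not} symmetric in $j$, so after contraction I must preserve the cyclic structure and recombine several summands that share the same tensor skeleton. A representative recombination is
\[
s(3E_{22} + sE_{222})\, l_l - (E_{22} + sE_{222})\, b_l = -\bigl[(E_{22} + sE_{222})(b_l - s l_l) - 2sE_{22}\, l_l\bigr],
\]
which reproduces the exact coefficient of $\phi\, b_k l_j$ in the second line of the stated $V_{jkl}$. Performing all such mergings consistently over the five blocks of $U^i_{~jkl}$ yields the displayed formula and completes the proof.
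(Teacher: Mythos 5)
Your proposal is correct and follows exactly the route the paper intends: the paper states this proposition without proof, attributing the computation to \cite{Zohrehvand-2016}, and that computation is precisely the contraction $L_{jkl}=-\tfrac{1}{2}y^mg_{im}B^i_{~jkl}$ with $y^mg_{im}=\alpha\phi[\phi l_i+\phi_2(b_i-sl_i)]$ applied to Proposition \ref{prop-Berwald}. Your simplification $s\rho_0+\rho_1=\phi\phi_2$ and the three contraction identities for $l^i$, $b^i$, $\delta^i_j$ check out, as does the representative coefficient recombination on the $\phi\,b_kl_j$ term.
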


By use of Proposition \ref{prop-Landsberg-prop} and Maple, we can calculate the mean Landsberg curvature of a general $(\alpha, \beta)$-metric, when $\beta$ is a closed and conformal one-form:
\begin{proposition}\label{prop-meanLandsberg-prop}
Let $F=\alpha\phi(b^2,s),~s=\beta/\alpha$, be a general $(\alpha, \beta)$-metric on an $n$-dimensional manifold $M$. Suppose that $\beta$ satisfies \eqref{condition-closed-conformal}, then the Landsberg curvature of $F$ is given by
\begin{equation}\label{prop-mean Landsberg}
J_j=-\dfrac{c\phi}{2\rho} W_{j},
\end{equation}
where
\begin{align}\label{W_j}
W_j:=& \lbrace (E-sE_2)(n+1)\phi_2 +3E_{22} \phi_2(b^2-s^2)-sE_{22}(n+1) \phi+E_{222}\phi(b^2-s^2)  \nonumber\\
&\quad +\lbrace (H_2-sH_{22})(n+1)+H_{222}(b^2-s^2)\rbrace (s\phi +(b^2-s^2)\phi_2)  \nonumber\\
&\quad+3\eta (E-sE_2)\phi_2(b^2-s^2) +3\eta E_{22}\phi_2(b^2-s^2)^2 -3s\eta E_{22}\phi(b^2-s^2) \nonumber\\
&\quad +\eta E_{222}(b^2-s^2)^2 \phi +\eta [ 3(H_2-sH_{22})(b^2- s^2)+H_{222}(b^2-s^2)^2 ]\nonumber\\
&~~~\times (s\phi +(b^2-s^2)\phi_2) \rbrace (b_j-sl_j)
\end{align}
where $\rho$ and $\eta$ is defined in \eqref{def-rho} and \eqref{def-eta} and $l_j:=a_{ij}l^i$.
\end{proposition}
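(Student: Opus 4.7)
The approach is to substitute the explicit formula for $L_{jkl}$ from Proposition \ref{prop-Landsberg-prop} into the definition $J_j = g^{ik} L_{ikj}$, using the closed-form inverse metric \eqref{g^ij}. The calculation simplifies dramatically thanks to one homogeneity identity. Since $G^i$ is $2$-homogeneous in $y$, the tensor $\partial^2 G^i/\partial y^j \partial y^k$ is $0$-homogeneous, and differentiating once more in $y$ gives $y^l B^i_{jkl} = 0$, hence $y^l L_{jkl} = 0$. When the full inverse metric
\[
g^{ik} = \dfrac{1}{\rho}\bigl\{a^{ik} + \eta\, b^i b^k + \eta_0 \alpha^{-1}(b^i y^k + b^k y^i) + \eta_1 \alpha^{-2} y^i y^k\bigr\}
\]
is inserted, the two pieces carrying $y$ (with coefficients $\eta_0$ and $\eta_1$) contribute zero, leaving
\[
J_j = -\dfrac{c\phi}{2\rho}\bigl(a^{ik} V_{ikj} + \eta\, b^i b^k V_{ikj}\bigr).
\]

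The second step is to evaluate the two contractions $a^{ik} V_{ikj}$ and $b^i b^k V_{ikj}$, a purely algebraic manipulation using only
\[
a^{ik}a_{ik}=n,\quad a^{ik}l_il_k = 1,\quad a^{ik}b_il_k = s,\quad a^{ik}b_ib_k = b^2,
\]
together with $l^jb_j = s$, $l^jl_j = 1$, and the corresponding $b^ib^k$-contractions. Each cyclic-permutation bracket $(k \to l \to j \to k)$ in $V_{jkl}$ produces, after contraction of two indices, one doubly contracted term and two equal mixed terms, giving the factors of $3$ and $(n+1)$ visible in the stated $W_j$. A useful structural guide is that $J_j$ is built only from $a_{ij}$, $y_j$ and $b_j$, so $J_j = Al_j + Bb_j$; the identity $y^jJ_j = 0$ then forces $A = -sB$, and hence $J_j \propto b_j - sl_j$, exactly as \eqref{prop-mean Landsberg} asserts. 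This gives a sanity check: every pure $l_j$-contribution produced during the contraction must cancel in the end.

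The main obstacle is combinatorial rather than conceptual: $V_{jkl}$ contains roughly twenty distinct tensor monomials in $\{a_{kl}, b_kb_l, l_kl_l, b_kl_l\}$, with coefficients that are polynomials in $\phi$, $\phi_2$, $E$, $H$ and their $s$-derivatives up to third order, and with prefactors $1$, $\phi$, or $s\phi + (b^2-s^2)\phi_2$. The natural route, as the authors note, is to delegate the bookkeeping to Maple and then organize the output by common prefactor ($\phi_2$, $\phi$, or $s\phi+(b^2-s^2)\phi_2$), by degree in $(b^2-s^2)$, and by the overall factor $\eta$ coming from the $b^ib^k$-contraction, collecting into the groups appearing on the right-hand side of \eqref{W_j}.
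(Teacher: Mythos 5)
Your proposal follows essentially the same route as the paper: it invokes $y^kL_{jkl}=0$ to discard the $\eta_0$- and $\eta_1$-terms of the inverse metric, reduces $J_j$ to $-\frac{c\phi}{2\rho}(a^{kl}+\eta b^kb^l)V_{jkl}$, and evaluates the two contractions by (machine-assisted) index bookkeeping, which is exactly what the authors do. The additional observation that $y^jJ_j=0$ forces $J_j\propto b_j-sl_j$ is a sensible sanity check but does not change the argument.
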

\begin{proof}
To compute the mean Landsberg curvature $J_j:=g^{kl}L_{jkl}$ using \eqref{prop-Landsberg}, we need to \eqref{g^ij}. Using
$L_{jkl}y^k=0$, we get
\begin{equation}\label{proof-J_j}
J_j:=\dfrac{1}{\rho}\lbrace a^{kl}+\eta b^kb^l\rbrace L_{jkl}=\dfrac{c\phi}{2\rho} \lbrace a^{kl}+\eta b^kb^l\rbrace V_{jkl}.
\end{equation}
We need
\begin{align*}
a^{kl}V_{jkl}&=\lbrace (E-sE_2)(n+1)\phi_2 +3E_{22} \phi_2(b^2-s^2)-sE_{22}(n+1) \phi+E_{222}\phi(b^2-s^2)  \\
&~~+[ (H_2-sH_{22})(n+1)+H_{222}(b^2-s^2)] (s\phi +(b^2-s^2)\phi_2) \rbrace (b_j-sl_j),
\end{align*}
and
\begin{align*}
\eta b^kb^l V_{jkl}&=\lbrace 3\eta (E-sE_2)\phi_2(b^2-s^2) +3\eta E_{22}\phi_2(b^2-s^2)^2-3s\eta E_{22}\phi(b^2-s^2) \\
&\quad + \eta E_{222}(b^2-s^2)^2 \phi +\eta [ 3(H_2-sH_{22})(b^2- s^2)+H_{222}(b^2-s^2)^2 ] \\
&~~~\times (s\phi +(b^2-s^2)\phi_2) \rbrace (b_j-sl_j).
\end{align*}
Substituting these in \eqref{proof-J_j}, we obtain \eqref{prop-mean Landsberg}.\qed
\end{proof}
Now, we can obtain the necessary and sufficient conditions for a general $(\alpha, \beta)$-metric to be weak Landsbergian.
\begin{proposition}\label{conditon-mean-Landsberg}
Let $F=\alpha\phi(b^2,s),~s=\beta/\alpha$, be a general $(\alpha, \beta)$-metric on an $n$-dimensional manifold $M$. Suppose that $\beta$ satisfies \eqref{condition-closed-conformal}, then $F$ is weak Landsberg metric if and only if the following equations hold:
\begin{align}
&E_{22}=0 ,~~ H_{222}=0, \label{equation of weak Landsberg-1}\\
&(E-sE_2)\phi_2 +(H_2-sH_{22})(s\phi +(b^2-s^2)\phi_2 =0 \label{equation of weak Landsberg-2}
\end{align}
\end{proposition}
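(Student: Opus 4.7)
The plan is to reduce $J_j\equiv 0$ to a single scalar functional identity on $(b^2,s)$, and then disentangle that identity into the three stated equations. I would start from Proposition \ref{prop-meanLandsberg-prop}: since $c\neq 0$ by the closed-and-conformal hypothesis, $\phi>0$ by definition, and $\rho>0$ by strong convexity, the formula $J_j=-\frac{c\phi}{2\rho}W_j$ shows that $J_j\equiv 0$ is equivalent to $W_j\equiv 0$. Because the covector $b_j-sl_j$ does not vanish identically as $y$ varies in $T_xM\setminus\{0\}$, this is in turn equivalent to the vanishing of the scalar factor $B(b^2,s)$ sitting in the large brace of \eqref{W_j}.

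My next step is to rearrange $B$ to expose its $\eta$-dependence. Setting
\begin{align*}
P&:=(E-sE_2)\phi_2-sE_{22}\phi+(H_2-sH_{22})\bigl(s\phi+(b^2-s^2)\phi_2\bigr),\\
Q&:= 3E_{22}\phi_2+E_{222}\phi+H_{222}\bigl(s\phi+(b^2-s^2)\phi_2\bigr),
\end{align*}
a direct regrouping of \eqref{W_j} gives $B=[(n+1)+3\eta(b^2-s^2)]P+(b^2-s^2)[1+\eta(b^2-s^2)]Q$. Substituting the formula for $\eta$ from \eqref{def-eta} and clearing the positive denominator $\phi-s\phi_2+(b^2-s^2)\phi_{22}$ turns $B=0$ into the single equation
\begin{equation*}
\bigl[(n+1)(\phi-s\phi_2)+(n-2)(b^2-s^2)\phi_{22}\bigr]P + (b^2-s^2)(\phi-s\phi_2)Q = 0,
\end{equation*}
which I refer to below as $(\ast)$.

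The sufficiency direction is then immediate: $E_{22}=0$ forces $E_{222}=0$, which together with $H_{222}=0$ kills $Q$, and \eqref{equation of weak Landsberg-2} reduces $P$ (after using $E_{22}=0$) to zero, so both summands in $(\ast)$ vanish.

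The main obstacle is the necessity direction: extracting three independent conditions from the single identity $(\ast)$, which must hold over the two-dimensional admissible region $|s|\leq b<b_0$. My plan is first to restrict $(\ast)$ to the diagonal $b^2=s^2$; this annihilates every $(b^2-s^2)$ factor and, using $\phi-s\phi_2>0$, forces $P$ to vanish along that diagonal. Then I would expand $(\ast)$ in powers of $t:=b^2-s^2$ about $t=0$ and match coefficients order by order, peeling off $Q\equiv 0$ and, in combination with the diagonal information, $P\equiv 0$ throughout the admissible domain. Finally, since the three building blocks $E_{22}\phi_2$, $E_{222}\phi$ and $H_{222}(s\phi+(b^2-s^2)\phi_2)$ have structurally independent dependence on $\phi$, the identity $Q\equiv 0$ should split into $E_{22}=0$ and $H_{222}=0$; once these are in hand, the residual content of $P\equiv 0$ is exactly \eqref{equation of weak Landsberg-2}. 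The delicate step is the last one, where the genuine independence of these building blocks under variation of $\phi$ within the admissible class has to be justified carefully.
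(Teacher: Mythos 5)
Your reduction of $J_j\equiv 0$ to the vanishing of the scalar factor in \eqref{W_j}, your regrouping of that factor as $[(n+1)+3\eta(b^2-s^2)]P+(b^2-s^2)[1+\eta(b^2-s^2)]Q$, and your sufficiency argument all check out; up to where the $-sE_{22}\phi$ term is parked, this is exactly the regrouping the paper itself performs. The genuine gap is in the necessity direction. After clearing denominators you hold a \emph{single} smooth identity $A_1P+tA_2Q=0$ on a two-dimensional domain, with $t:=b^2-s^2$, $A_1=(n+1)(\phi-s\phi_2)+(n-2)t\phi_{22}$ and $A_2=\phi-s\phi_2$. Restricting to $t=0$ does give $P=0$ on the diagonal, but the expansion in powers of $t$ does not ``peel off'' $Q$: at first order you get $(n+1)\,\partial_tP|_{t=0}+Q|_{t=0}=0$, which entangles the transverse derivative of $P$ with the value of $Q$, and the same entanglement recurs at every order. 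A single scalar identity only tells you $P=-tA_2Q/A_1$ wherever $A_1\neq0$; it cannot by itself deliver $P\equiv0$ and $Q\equiv0$ separately. The final splitting of $Q\equiv0$ into $E_{22}=0$ and $H_{222}=0$ is equally unsupported: $E$ and $H$ are both explicit functionals of the single function $\phi$ via \eqref{def-E} and \eqref{def-H}, so the three building blocks of $Q$ are not free to vary independently, and ``structural independence under variation of $\phi$'' is not available. A real proof of necessity would have to unpack those explicit formulas into differential relations on $\phi$ (in the spirit of Zou--Cheng's treatment of the $(\alpha,\beta)$ case), not argue from the algebraic shape of $W_j$ alone.

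For what it is worth, the paper's own proof has precisely the same hole: it rewrites $W_j=0$ as a sum of four grouped terms and then asserts (``Thus'') that each group vanishes separately, with no justification. So you have correctly located the hard step and, unlike the paper, flagged it as delicate; but the mechanism you propose for closing it (Taylor matching in $t$ plus independence of the building blocks) would not succeed as described.
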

\begin{proof}
Let $F=\alpha\phi(b^2,s),~s=\beta/\alpha$, be a weak Landsberg metric, where $\beta$ is a closed and conformal one-form. From Proposition \ref{prop-Landsberg-prop}, it concluded 
\begin{equation}\label{W_1}
W_j=0, 
\end{equation}
where $W_j$ is defined in \eqref{W_j}.
We can rewrite \eqref{W_1} as following:
\begin{align}
&[1+n+3(b^2-s^2)\eta ][(E-sE_2)\phi_2 +(H_2-sH_{22})(s\phi +(b^2-s^2)\phi_2)]  \nonumber\\
&+(b^2-s^2)[1+(b^2-s^2)\eta][s\phi+(b^2-s^2)\phi_2 ] H_{222} \nonumber\\
&+\lbrace 3(b^2-s^2)[1+(b^2-s^2)\eta]\phi_2-[1+n+3(b^2-s^2)\eta] s\phi\rbrace E_{22} \nonumber\\
&+(b^2-s^2)[1+(b^2-s^2)\eta]\phi E_{222}=0.
\end{align}
Thus
\begin{align}
&[1+n+3(b^2-s^2)\eta ][(E-sE_2)\phi_2 +(H_2-sH_{22})(s\phi +(b^2-s^2)\phi_2)]=0 \label{eq2}\\
&(b^2-s^2)[1+(b^2-s^2)\eta][s\phi+(b^2-s^2)\phi_2 ] H_{222}=0 \label{eq1-2}\\
&\lbrace 3(b^2-s^2)[1+(b^2-s^2)\eta]\phi_2-[1+n+3(b^2-s^2)\eta]s \phi\rbrace E_{22}=0 \label{eq1-1}\\
&(b^2-s^2)[1+(b^2-s^2)\eta]\phi E_{222}=0.
\end{align}
Since $1+n+3(b^2-s^2)\eta \neq 0$, we have from \eqref{eq2}
\begin{equation}
(E-sE_2)\phi_2 +(H_2-sH_{22})(s\phi +(b^2-s^2)\phi_2)=0. 
\end{equation}
From \eqref{eq1-2}, we know $(b^2-s^2)[1+(b^2-s^2)\eta]\neq 0$, then $s\phi+(b^2-s^2)\phi_2=0$ or $H_{222}=0$. If $s\phi+(b^2-s^2)\phi_2=0$, we have
\begin{equation*}
\phi=\sigma(b^2)\sqrt{s^2-b^2}
\end{equation*}
where $\sigma(b^2)$ is any positive smooth function. This is a Riemannian case and we have
\begin{equation}
H_{222}=0.
\end{equation} 
In \eqref{eq1-1}, we have 
$3(b^2-s^2)[1+(b^2-s^2)\eta]\phi_2-[1+n+3(b^2-s^2)\eta]s\phi\neq 0$ and 
\begin{equation}
E_{22}=0.
\end{equation}\qed
\end{proof}

\textbf{Proof of Theorem \ref{Main Theo}.}Since in \cite{Zohrehvand-2016} is obtained exactly \eqref{equation of weak Landsberg-1} and \eqref{equation of weak Landsberg-2} for Landsberg metrics, according to \ref{conditon-mean-Landsberg}, it is obvious. \qed

\textbf{Proof of Corollary \ref{coro1}.}
Here, by \cite{Zohrehvand-2016}, we have $E-sE_2=0$ and $H_2-sH_{22}=0$ also.
In \cite{Zhu-2015}, it is proved that a general $(\alpha, \beta)$-metric where $\beta$ is closed and conformal one-form, is a Berwald metric if and only if
\[ E-sE_2=0,~~H_2-sH_{22}=0 \]
and in this case $F$ can be expressed by
\begin{equation}\label{General formula}
F=\alpha\varphi
\left( \dfrac{s^2}{e^{\int{(\frac{1}{b^2}-b^2\theta)db^2}}+s^2\int{\theta e^{\int{(\frac{1}{b^2}-b^2\theta)db^2} }db^2}}
\right) e^{ \int{(\frac{1}{2}b^2\theta-\frac{1}{b^2} ) db^2} }s
\end{equation}
where $\varphi(.)$ is any positive continuously differentiable function and $\theta$ is a smooth function of $b^2$ \cite{Zhu-2015}. This complete the proof.\qed
\section{Examples}
In this section, we will explicitly construct some new examples. \\
\begin{example}
 Take $\varphi(t):=1+\xi t$ and $\theta(b^2):=1$, then by \eqref{General formula}
\[ \phi(b^2,s)=\dfrac{\left(s^2(\xi e^{\frac{b^4}{2}}-1)+b^2\right)e^{\frac{b^4}{4}}s}{b^2(b^2-s^2)}. \]
We can see that $\phi(b^2,s)$ satisfies in \eqref{equation of weak Landsberg-1} and \eqref{equation of weak Landsberg-2}. Moreover, the corresponding general $(\alpha,\beta)$-metrics
\[ F:=\alpha\phi\left(b^2, \frac{\beta}{\alpha}\right)\]
are Landsberg and weak Landsberg metrics, i.e. $L_{jkl}=0$ and $J_j=0$.
\end{example}
\begin{example}
Take $\varphi(t):=\frac{\mu}{1+\xi t}$ and $\theta(b^2):=\frac{\varepsilon}{1+\xi b^2}$ in \eqref{General formula}, we have
\[
\phi(b^2,s):=\dfrac{\mu (b^2-s^2)e^{\frac{1}{2}\frac{\varepsilon\left(\xi b^2-\ln(\xi b^2+1) \right)}{\xi^2}}s}{\left(s^2b^2\xi e^{\frac{b^2\varepsilon\xi-\xi^2 \ln(b^2)-\varepsilon\ln(\xi b^2+1) }{\xi^2}}+b^2-s^2\right)b^2} .
\]
The corresponding general $(\alpha,\beta)$-metrics are Landsberg and weak Landsberg metrics.
\end{example}
\section*{Acknowledgment}
The authors would like to express their special thanks to Dr. M. Zohrehvand for his valuable opinions on this paper.


Department of Mathematics\\
 Faculty of Mathematics Science\\
 University of Mazandaran, Babolsar, Iran\\
 ali.ala@stu.umz.ac.ir \\
behzadi@umz.ac.ir\\
rafiei-rad@umz.ac.ir
\end{document}